\DeclarePairedDelimiter\ceil{\lceil}{\rceil}
\DeclarePairedDelimiter\floor{\lfloor}{\rfloor}
\newtheorem{Theorem}{Theorem}[section]
\newtheorem{Property}{Property}[section]
\newtheorem{Assumption}{Assumption}[section]
\title{Rainbow Perfect and Near-Perfect Matchings in \\
Complete Graphs with Edges Colored by \\
Circular Distance}
\author{
    Shuhei~Saito\\
    Seikei University\\
    \And
    Wei Wu\thanks{Corresponding author. Tel: +81-53-478-1213; fax: +81-53-478-1213; email: \texttt{goi@shizuoka.ac.jp}.}\\
    Shizuoka University\\
    \And
    Naoki~Matsumoto\\
    Keio University\\
}
\begin{document}
\maketitle

\begin{abstract}
Given an edge-colored complete graph $K_n$ on $n$ vertices, a perfect (respectively, near-perfect) matching $M$ in $K_n$ with an even (respectively, odd) number of vertices is rainbow if all edges have distinct colors.
In this paper, we consider an edge coloring of $K_n$ by circular distance,
and we denote the resulting complete graph by $K^{\bullet}_n$.
We show that when $K^{\bullet}_n$ has an even number of vertices, it contains a rainbow perfect matching if and only if $n=8k$ or $n=8k+2$, where $k$ is a nonnegative integer.
In the case of an odd number of vertices, Kirkman matching is known to be a rainbow near-perfect matching in $K^{\bullet}_n$.
However, real-world applications sometimes require multiple rainbow near-perfect matchings.
We propose a method for using a recursive algorithm to generate multiple rainbow near-perfect matchings in $K^{\bullet}_n$.
\end{abstract}

\keywords{Rainbow perfect matching \and Edge-colored complete graph \and Circular distance \and Sports scheduling \and Round-robin tournament}

\section{Introduction}
Given an \mbox{edge-colored} undirected graph $G$, a \emph{rainbow matching} (or \emph{heterochromatic matching}) $M$ in $G$ is a matching (a set of edges without common vertices) such that all edges have distinct colors.
While it is possible to find a maximum matching in $G$ in polynomial time,
computing a maximum rainbow matching is known to be an \mbox{NP-hard} problem.
Indeed, the decision version of this problem is a classical example of \mbox{NP-complete} problems, even for edge-colored bipartite graphs~\cite{garey1979computers}.

If a graph $G$ has an even number of vertices,
a \emph{rainbow perfect matching} in $G$ is a rainbow matching that matches all vertices of the graph. If the number of vertices is odd, a \emph{rainbow near-perfect matching} $M$ in $G$ is a rainbow matching in which exactly one vertex is unmatched.
Note that in this paper, we use \emph{RPM} to mean either a rainbow perfect matching or a rainbow near-perfect matching in the corresponding graph.

Over the past decade, finding rainbow perfect and near-perfect matchings in edge-colored graphs or hypergraphs has been studied for several classes of graphs,
including
complete bipartite graphs~\cite{perarnau2013rainbow}, $r$-partite graphs~\cite{cano2016rainbow},
Dirac bipartite graphs~\cite{coulson2019rainbow},
random geometric graphs~\cite{bal2017rainbow},
and $k$-uniform, $k$-partite hypergraphs~\cite{bal2016rainbow}.
Most of the above studies assume that there are more colors used for coloring than there are colors among matchings.
If we do not insist on perfect matchings, other studies have demonstrated the existence of large rainbow matchings in arbitrarily edge-colored graphs.
Letting $\hat{\delta}(G)$ denote the minimum color degree 
of an edge-colored graph $G$,
Wang and Li~\cite{wang2008heterochromatic} first showed the existence of a rainbow matching whose size is $\ceil*{\frac{5\hat{\delta}(G)-3}{12}}$,
and conjectured that a tighter lower bound $\ceil*{\frac{\hat{\delta}(G)}{2}}$ exists if $\hat{\delta}(G)\ge 4$ holds.
LeSaulnier~et~al.~\cite{lesaulnier2010rainbow} proved a weaker statement, that
an edge-colored graph $G$ always contains a rainbow matching of size at least $\floor*{\frac{\hat{\delta}(G)}{2}}$.
Kostochka and Yancey~\cite{kostochka2012large} completed a proof of Wang and Li’s conjecture.
Letting $n$ be the size of vertices of a graph $G$, Lo~\cite{lo2015existences} showed that an edge-colored graph $G$ contains a rainbow matching of size at least $k$, where $k=\min\left\{\hat{\delta}(G), \frac{2n-4}{7}\right\}$.
If the graph is properly edge-colored
(i.e., no two adjacent edges have the same color),
several results~\cite{wang2011rainbow,diemunsch2011rainbow,gyarfas2014rainbow,aharoni2019large} have provided lower bounds for the size of a maximum rainbow matching.
Other recent studies~\cite{babu2015rainbow,wang2016existence,cheng2018note} have focused on finding large rainbow matchings under the stronger assumption that the given is strongly edge-colored.
See Kano and Li~\cite{kano2008monochromatic} for a deeper analysis of rainbow subgraphs in an edge-colored graph.

Many results related to rainbow matchings in complete graphs have also been obtained from Ramsey-type problems.
Letting $M_k$ denote a matching of size $k$,
Fujita~et~al.~\cite{fujita2009rainbow} determined $AR(M_k,n)$ for $k\ge 2$ and $n\ge 2k+1$ and provided a conjecture regarding $AR(M_k,2k)$,
where the anti-Ramsey value $AR(H,n)$ is the smallest integer $r$ such that for any exact 
\mbox{$r$-edge} coloring of $K_n$, there exists a subgraph isomorphic to $H$ that is rainbow.
Haas and Young~\cite{haas2012anti} confirmed the conjecture regarding $AR(M_k,2k)$ for $k\ge 3$.
More Ramsey-type results can be found in a survey by Fujita, Magnant, and Ozeki~\cite{fujita2010rainbow}.
To the best of our knowledge, there has been no research on finding RPMs in edge-colored complete graphs with exactly $\floor*{\frac{n}{2}}$ colors.
If we consider any exact 
\mbox{$\floor*{\frac{n}{2}}$-edge} coloring of $K_n$ where $n\ge 4$,
the results in~\cite{fujita2009rainbow} show that the size of a maximum rainbow matching is bounded by $\mathcal{O}\left(\sqrt{\frac{n}{2}}\right)$.
This indicates that there does not always exist an RPM in $K_n$ by an arbitrary \mbox{$\floor*{\frac{n}{2}}$-edge} coloring.
Thus, we consider RPMs in edge-colored complete graphs by a special \mbox{$\floor*{\frac{n}{2}}$-edge} coloring, called a circular-distance edge coloring.
We show results for the existence and non-existence of such RPMs,
and then propose a method for using a recursive algorithm to generate multiple RPMs when $n$ is odd.

In this paper, all graphs are simple and undirected.
A graph $G$ is defined as $G=(V,E)$, where $V$ (or $V(G)$) is the set of vertices and $E$ (or $E(G)$) is the set of edges.
We define a vertex set $V$ containing $n$ vertices as $V= \{0, 1, \ldots, n-1\}$, unless indicated otherwise.
Given a graph $G$ and a set of distinct colors $C=\{c_1,c_2,\ldots\}$,
the \emph{\mbox{circular-distance} edge coloring} of a graph $G$ is a mapping $h: E(G)\rightarrow C$, where an edge $\{i,j\}\in E(G)$ is colored as
\begin{align}\label{def:coloring}
    h(\{i,j\}) = c_{\min\{|i-j|, n-|i-j|\}}.
\end{align}
We denote by $K^\bullet_n$ the \mbox{edge-colored} complete graph $K_n$ with each edge $e$ colored by $h(e)$.
According to coloring~\eqref{def:coloring}, $K^\bullet_n$ is colored in exactly $\floor*{\frac{n}{2}}$ colors, and it is not properly colored if $n\ge 3$.
Here, we aim to find an RPM $M$ in $K^\bullet_n$, that is, a rainbow matching $M$ where $|M| = \floor*{\frac{n}{2}}$.
Figures~\ref{fig:k*7} and \ref{fig:k*8} respectively show examples of $K^\bullet_7$ and $K^\bullet_8$ with their RPMs.

\begin{figure}[ht]
\centering
\begin{tikzpicture}
    \node at (-5, 0) {$c_1$: };
    \draw[darkgray, very thick] (-4.5, 0) -- (-4, 0);
    \node at (-0.5, 0) {$c_2$: };
    \draw[brown, very thick] (0, 0) -- (0.5, 0);
    \node at (4, 0) {$c_3$: };
    \draw[teal, very thick] (4.5, 0) -- (5, 0);
\end{tikzpicture}\\
\begin{subfigure}{.5\textwidth}
    \centering
    \begin{tikzpicture}
        \edef\N{7}
        \pgfmathparse{int(\N-1)}
        \edef\NM{\pgfmathresult}
        \foreach \n in {0,...,\NM}{
            \node[circle,draw,minimum size = .1cm](\n) at (90-180/\N-360/\N*\n:2){\n}; 
        }
        
        \draw (0) edge[darkgray, very thick] (1);
        \draw (1) edge[darkgray, very thick] (2);
        \draw (2) edge[darkgray, very thick] (3);
        \draw (3) edge[darkgray, very thick] (4);
        \draw (4) edge[darkgray, very thick] (5);
        \draw (5) edge[darkgray, very thick] (6);
        \draw (6) edge[darkgray, very thick] (0);
        
        \draw (0) edge[brown, very thick] (2);
        \draw (1) edge[brown, very thick] (3);
        \draw (2) edge[brown, very thick] (4);
        \draw (3) edge[brown, very thick] (5);
        \draw (4) edge[brown, very thick] (6);
        \draw (5) edge[brown, very thick] (0);
        \draw (6) edge[brown, very thick] (1);
        
        \draw (0) edge[teal, very thick] (3);
        \draw (1) edge[teal, very thick] (4);
        \draw (2) edge[teal, very thick] (5);
        \draw (3) edge[teal, very thick] (6);
        \draw (4) edge[teal, very thick] (0);
        \draw (5) edge[teal, very thick] (1);
        \draw (6) edge[teal, very thick] (2);
    \end{tikzpicture}
    \caption{The edge-colored graph $K^\bullet_7$}
\end{subfigure}%
\begin{subfigure}{.5\textwidth}
    \centering
    \begin{tikzpicture}
        \edef\N{7}
        \pgfmathparse{int(\N-1)}
        \edef\NM{\pgfmathresult}
        \foreach \n in {0,...,\NM}{
            \node[circle,draw,minimum size = .1cm](\n) at (90-180/\N-360/\N*\n:2){\n}; 
        }
        \draw (0) edge[darkgray, very thick] (6);
        \draw (2) edge[brown, very thick] (4);
        \draw (1) edge[teal, very thick] (5);
    \end{tikzpicture}
    \caption{A rainbow near-perfect matching in $K^\bullet_7$} \label{fig:rpm-k*7}
\end{subfigure}
\caption{Examples of $K^\bullet_7$} \label{fig:k*7}
\end{figure}

\begin{figure}[ht]
\centering
\begin{tikzpicture}
    \node at (-5, 0) {$c_1$: };
    \draw[darkgray, very thick] (-4.5, 0) -- (-4, 0);
    \node at (-2, 0) {$c_2$: };
    \draw[brown, very thick] (-1.5, 0) -- (-1, 0);
    \node at (1, 0) {$c_3$: };
    \draw[teal, very thick] (1.5, 0) -- (2, 0);
    \node at (4, 0) {$c_4$: };
    \draw[purple, very thick] (4.5, 0) -- (5, 0);
\end{tikzpicture}\\
\begin{subfigure}{.5\textwidth}
    \centering
    \begin{tikzpicture}
        \edef\N{8}
        \pgfmathparse{int(\N-1)}
        \edef\NM{\pgfmathresult}
        \foreach \n in {0,...,\NM}{
            \node[circle,draw,minimum size = .1cm](\n) at (90-180/\N-360/\N*\n:2){\n}; 
        }
        \draw (0) edge[darkgray, very thick] (1);
        \draw (1) edge[darkgray, very thick] (2);
        \draw (2) edge[darkgray, very thick] (3);
        \draw (3) edge[darkgray, very thick] (4);
        \draw (4) edge[darkgray, very thick] (5);
        \draw (5) edge[darkgray, very thick] (6);
        \draw (6) edge[darkgray, very thick] (7);
        \draw (7) edge[darkgray, very thick] (0);
        
        \draw (0) edge[brown, very thick] (2);
        \draw (1) edge[brown, very thick] (3);
        \draw (2) edge[brown, very thick] (4);
        \draw (3) edge[brown, very thick] (5);
        \draw (4) edge[brown, very thick] (6);
        \draw (5) edge[brown, very thick] (7);
        \draw (6) edge[brown, very thick] (0);
        \draw (7) edge[brown, very thick] (1);
        
        \draw (0) edge[teal, very thick] (3);
        \draw (1) edge[teal, very thick] (4);
        \draw (2) edge[teal, very thick] (5);
        \draw (3) edge[teal, very thick] (6);
        \draw (4) edge[teal, very thick] (7);
        \draw (5) edge[teal, very thick] (0);
        \draw (6) edge[teal, very thick] (1);
        \draw (7) edge[teal, very thick] (2);
        
        \draw (0) edge[purple, very thick] (4);
        \draw (1) edge[purple, very thick] (5);
        \draw (2) edge[purple, very thick] (6);
        \draw (3) edge[purple, very thick] (7);
        \draw (4) edge[purple, very thick] (0);
        \draw (5) edge[purple, very thick] (1);
        \draw (6) edge[purple, very thick] (2);
        \draw (7) edge[purple, very thick] (3);
    \end{tikzpicture}
    \caption{The edge-colored graph $K^\bullet_8$}
\end{subfigure}%
\begin{subfigure}{.5\textwidth}
    \centering
    \begin{tikzpicture}
        \edef\N{8}
        \pgfmathparse{int(\N-1)}
        \edef\NM{\pgfmathresult}
        \foreach \n in {0,...,\NM}{
            \node[circle,draw,minimum size = .1cm](\n) at (90-180/\N-360/\N*\n:2){\n}; 
        }
        \draw (0) edge[darkgray, very thick] (1);
        \draw (4) edge[brown, very thick] (6);
        \draw (2) edge[teal, very thick] (5);
        \draw (7) edge[purple, very thick] (3);
    \end{tikzpicture}
    \caption{A rainbow perfect matching in $K^\bullet_8$} \label{fig:rpm-k*8}
\end{subfigure}
\caption{Examples of $K^\bullet_8$} \label{fig:k*8}
\end{figure}

Given an RPM $M$ in $K^\bullet_n$,
an \emph{$\alpha$-rotated matching} of $M$ denoted by $\mathrm {rot}(M, \alpha)$ is defined as
\begin{align}\label{eq:m-rot}
\mathrm {rot}(M, \alpha) = \{\{(i+\alpha) \bmod n,(j+\alpha) \bmod n\} \mid \forall \{i,j\}\in M \}.
\end{align}
Figure~\ref{fig:rpm-rk7} shows the \mbox{1-rotated} matching $\mathrm {rot}(M,1)$ for the RPM $M$ in Figure~\ref{fig:rpm-k*7}.
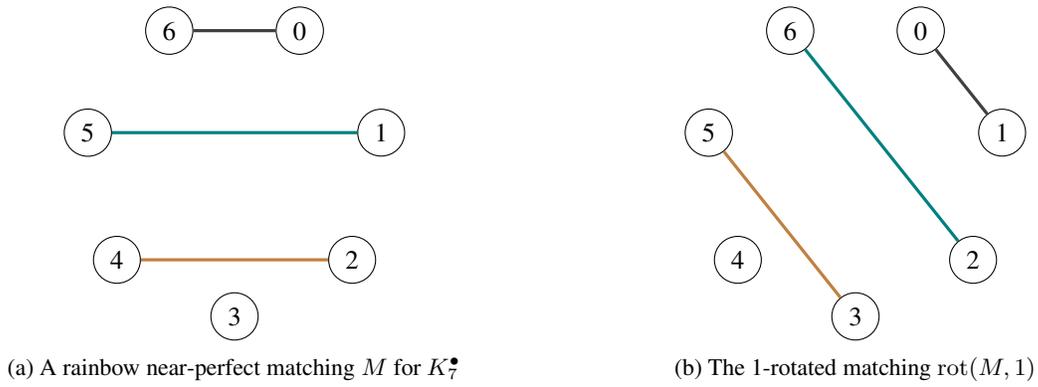
\begin{figure}[ht]
\centering
\begin{subfigure}{.5\textwidth}
    \centering
    \begin{tikzpicture}
        \edef\N{7}
        \pgfmathparse{int(\N-1)}
        \edef\NM{\pgfmathresult}
        \foreach \n in {0,...,\NM}{
            \node[circle,draw,minimum size = .1cm](\n) at (90-180/\N-360/\N*\n:2){\n}; 
        }
        \draw (0) edge[darkgray, very thick] (6);
        \draw (2) edge[brown, very thick] (4);
        \draw (1) edge[teal, very thick] (5);
    \end{tikzpicture}
    \caption{A rainbow near-perfect matching $M$ for $K^\bullet_7$}
\end{subfigure}%
\begin{subfigure}{.5\textwidth}
    \centering
    \begin{tikzpicture}
        \edef\N{7}
        \pgfmathparse{int(\N-1)}
        \edef\NM{\pgfmathresult}
        \foreach \n in {0,...,\NM}{
            \node[circle,draw,minimum size = .1cm](\n) at (90-180/\N-360/\N*\n:2){\n}; 
        }
        \draw (1) edge[darkgray, very thick] (0);
        \draw (3) edge[brown, very thick] (5);
        \draw (2) edge[teal, very thick] (6);
    \end{tikzpicture}
    \caption{The \mbox{1-rotated} matching $\mathrm{rot}(M,1)$} \label{fig:rrpm-k*7}
\end{subfigure}
\caption{Example 1-rotated RPM} \label{fig:rpm-rk7}
\end{figure}
This example shows that if we arrange vertices $0, 1, \ldots, n-1$ around a cycle in clockwise order, $\mathrm {rot}(M,\alpha)$ matches by rotating matching $M$ by $\frac{2\alpha}{n}\pi$ in the clockwise direction.

If $M$ is an RPM in the graph $K^\bullet_n$, we observe that the following properties hold from definition~\eqref{eq:m-rot}:
\begin{Property}\label{pro:rot}
$\forall \alpha \in \mathbb{Z}, \mathrm{rot}(M,\alpha)$ is an RPM,
\end{Property}
\begin{Property}
$M = \mathrm{rot}(M,0)$,
\end{Property}
\begin{Property}
If $\alpha' \equiv \alpha'' \pmod{n}$, $\mathrm{rot}(M,\alpha') = \mathrm{rot}(M,\alpha'')$, 
\end{Property}

Next, we define the reversed matching of $M$ as $\mathrm{rev}(M)$:
\begin{align}\label{eq:m-rev}
\mathrm{rev}(M) = \{\{n-1-i,n-1-j\} \mid \forall \{i,j\}\in M\}.
\end{align}
Figure~\ref{fig:rpm-rk8} shows the reversed matching $\mathrm{rev}(M)$ for the RPM $M$ in Figure~\ref{fig:rpm-k*8}.
\begin{figure}[ht]
\centering
\begin{subfigure}{.5\textwidth}
    \centering
    \begin{tikzpicture}
        \edef\N{8}
        \pgfmathparse{int(\N-1)}
        \edef\NM{\pgfmathresult}
        \foreach \n in {0,...,\NM}{
            \node[circle,draw,minimum size = .1cm](\n) at (90-180/\N-360/\N*\n:2){\n}; 
        }
        \draw (0) edge[darkgray, very thick] (1);
        \draw (4) edge[brown, very thick] (6);
        \draw (2) edge[teal, very thick] (5);
        \draw (7) edge[purple, very thick] (3);
    \end{tikzpicture}
    \caption{A rainbow perfect matching $M$ in $K^\bullet_8$}
\end{subfigure}%
\begin{subfigure}{.5\textwidth}
    \centering
    \begin{tikzpicture}
        \edef\N{8}
        \pgfmathparse{int(\N-1)}
        \edef\NM{\pgfmathresult}
        \foreach \n in {0,...,\NM}{
            \node[circle,draw,minimum size = .1cm](\n) at (90-180/\N-360/\N*\n:2){\n}; 
        }
        \draw (7) edge[darkgray, very thick] (6);
        \draw (3) edge[brown, very thick] (1);
        \draw (5) edge[teal, very thick] (2);
        \draw (0) edge[purple, very thick] (4);
    \end{tikzpicture}
    \caption{The reversed matching $\mathrm{rev}(M)$}
\end{subfigure}
\caption{Example reversed RPM} \label{fig:rpm-rk8}
\end{figure}
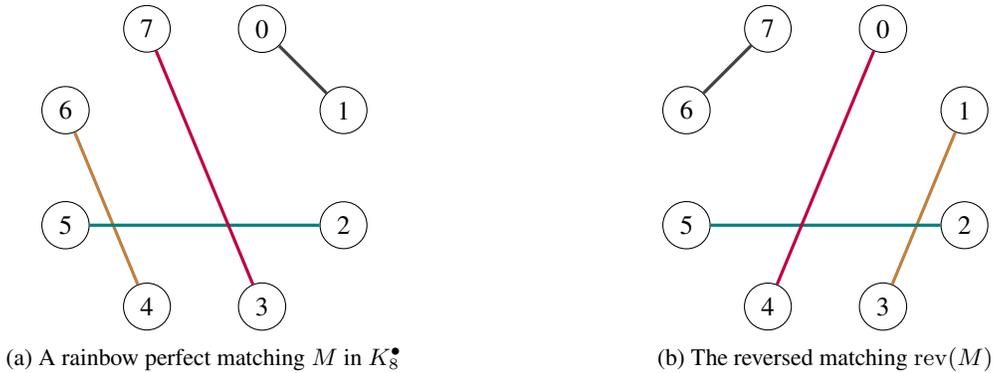
Given an RPM $M$ in the graph $K^\bullet_n$, the following properties hold by definition~\eqref{eq:m-rev}:
\begin{Property}\label{pro:rev}
The matching $\mathrm{rev}(M)$ is also an RPM in the graph $K^\bullet_n$,
\end{Property}
\begin{Property}
$\mathrm{rev}(\mathrm{rev}(M)) = M$.
\end{Property}

\subsection{\mbox{Round-robin} tournament scheduling problem}
In real-world applications, some combinatorial problems and their sub-problems are related to searches for an RPM in $K^\bullet_n$.
A well-known example is scheduling for round-robin tournaments.

Given $n$ teams (where $n$ is even), a tournament organizer must decide which games take place in which rounds.
The \emph{round-robin tournament scheduling problem} (RTSP) aims to generate a schedule with $n-1$ rounds such that
\begin{itemize}
    \item each pair of teams is matched exactly once, and
    \item each team plays exactly one game in each round.
\end{itemize}

Translating this problem into the language of graph theory, let each team be a vertex, and let an edge connecting vertices $i$ and $j$ represent a game between teams $i$ and $j$.
A perfect matching in the complete graph $K_n$ thus describes a round of $n/2$ games.
Therefore, the RTSP with $n$ teams is the problem of decomposing the complete graph $K_n$ into $n-1$ perfect matchings~\cite{de1980geography, de1981scheduling,januario2016edge}.

Next, we show that a decomposition of $K_n$ can be formed based on any rainbow \mbox{near-perfect} matching $M$ in $K^\bullet_{n-1}$.
Letting vertex $i' \in V(K^\bullet_{n-1})$ be a vertex not matched by $M$,
\begin{align}
\label{eq:decomp}   &\mathrm{rot}(M,i)\cup\{\{(i'+i) \bmod{(n-1)}, n-1\}\}, &\forall i \in \{0, 1, \ldots, n-2\}
\end{align}
is a feasible decomposition of $K_n$.
Another decomposition using the reversed RPM $\mathrm{rev}(M)$ can be similarly constructed as
\begin{align}
\label{eq:decomp-rev}   &\mathrm{rot}(\mathrm{rev}(M),i)\cup\{\{(n-1-i'+i) \bmod{(n-1)}, n-1\}\}, &\forall i \in \{0, 1, \ldots, n-2\}.
\end{align}

Figure~\ref{fig:rtsp8} shows a feasible schedule for $n=8$ teams (a decomposition of $K_8$) using method~\eqref{eq:decomp} based on the RPM in Figure~\ref{fig:rpm-k*7}.
\begin{figure}[ht]
\edef\N{7}
\pgfmathparse{int(\N-1)}
\edef\NM{\pgfmathresult}
\centering
\begin{subfigure}{.5\textwidth}
    \centering
    \begin{tikzpicture}
        \foreach \n in {0,...,\NM}{
            \node[circle,draw,minimum size = .1cm](\n) at (90-180/\N-360/\N*\n:1.8){\n}; 
        }
        \node[circle,draw,minimum size = .1cm] at (0, 0) (7) {7};
        \draw (0) edge[darkgray, very thick] (6);
        \draw (1) edge[brown, very thick] (5);
        \draw (2) edge[teal, very thick] (4);
        \draw (3) edge[loosely dashed, very thick] (7);
    \end{tikzpicture}
    \caption{Games in round 1}
\end{subfigure}%
\begin{subfigure}{.5\textwidth}
    \centering
    \begin{tikzpicture}
        \foreach \n in {0,...,\NM}{
            \node[circle,draw,minimum size = .1cm](\n) at (90-180/\N-360/\N*\n:1.8){\n}; 
        }
        \node[circle,draw,minimum size = .1cm] at (0, 0) (7) {7};
        \draw (1) edge[darkgray, very thick] (0);
        \draw (2) edge[brown, very thick] (6);
        \draw (3) edge[teal, very thick] (5);
        \draw (4) edge[loosely dashed, very thick] (7);
    \end{tikzpicture}
    \caption{Games in round 2}
\end{subfigure}\\
\vspace{5mm}
\begin{subfigure}{.5\textwidth}
    \centering
    \begin{tikzpicture}
        \foreach \n in {0,...,\NM}{
            \node[circle,draw,minimum size = .1cm](\n) at (90-180/\N-360/\N*\n:1.8){\n}; 
        }
        \node[circle,draw,minimum size = .1cm] at (0, 0) (7) {7};
        \draw (2) edge[darkgray, very thick] (1);
        \draw (3) edge[brown, very thick] (0);
        \draw (4) edge[teal, very thick] (6);
        \draw (5) edge[loosely dashed, very thick] (7);
    \end{tikzpicture}
    \caption{Games in round 3}
\end{subfigure}%
\begin{subfigure}{.5\textwidth}
    \centering
    \begin{tikzpicture}
        \foreach \n in {0,...,\NM}{
            \node[circle,draw,minimum size = .1cm](\n) at (90-180/\N-360/\N*\n:1.8){\n}; 
        }
        \node[circle,draw,minimum size = .1cm] at (0, 0) (7) {7};
        \draw (3) edge[darkgray, very thick] (2);
        \draw (4) edge[brown, very thick] (1);
        \draw (5) edge[teal, very thick] (0);
        \draw (6) edge[loosely dashed, very thick] (7);
    \end{tikzpicture}
    \caption{Games in round 4}
\end{subfigure}\\
\vspace{5mm}
\begin{subfigure}{.5\textwidth}
    \centering
    \begin{tikzpicture}
        \foreach \n in {0,...,\NM}{
            \node[circle,draw,minimum size = .1cm](\n) at (90-180/\N-360/\N*\n:1.8){\n}; 
        }
        \node[circle,draw,minimum size = .1cm] at (0, 0) (7) {7};
        \draw (4) edge[darkgray, very thick] (3);
        \draw (5) edge[brown, very thick] (2);
        \draw (6) edge[teal, very thick] (1);
        \draw (0) edge[loosely dashed, very thick] (7);
    \end{tikzpicture}
    \caption{Games in round 5}
\end{subfigure}%
\begin{subfigure}{.5\textwidth}
    \centering
    \begin{tikzpicture}
        \foreach \n in {0,...,\NM}{
            \node[circle,draw,minimum size = .1cm](\n) at (90-180/\N-360/\N*\n:1.8){\n}; 
        }
        \node[circle,draw,minimum size = .1cm] at (0, 0) (7) {7};
        \draw (5) edge[darkgray, very thick] (4);
        \draw (6) edge[brown, very thick] (3);
        \draw (0) edge[teal, very thick] (2);
        \draw (1) edge[loosely dashed, very thick] (7);
    \end{tikzpicture}
    \caption{Games in round 6}
\end{subfigure}\\
\vspace{5mm}
\begin{subfigure}{.5\textwidth}
    \centering
    \begin{tikzpicture}
        \foreach \n in {0,...,\NM}{
            \node[circle,draw,minimum size = .1cm](\n) at (90-180/\N-360/\N*\n:1.8){\n}; 
        }
        \node[circle,draw,minimum size = .1cm] at (0, 0) (7) {7};
        \draw (6) edge[darkgray, very thick] (5);
        \draw (0) edge[brown, very thick] (4);
        \draw (1) edge[teal, very thick] (3);
        \draw (2) edge[loosely dashed, very thick] (7);
    \end{tikzpicture}
    \caption{Games in round 7}
\end{subfigure}
\caption{Feasible schedule of 8 teams based on a perfect rainbow matching of $K^\bullet_7$} \label{fig:rtsp8}
\end{figure}
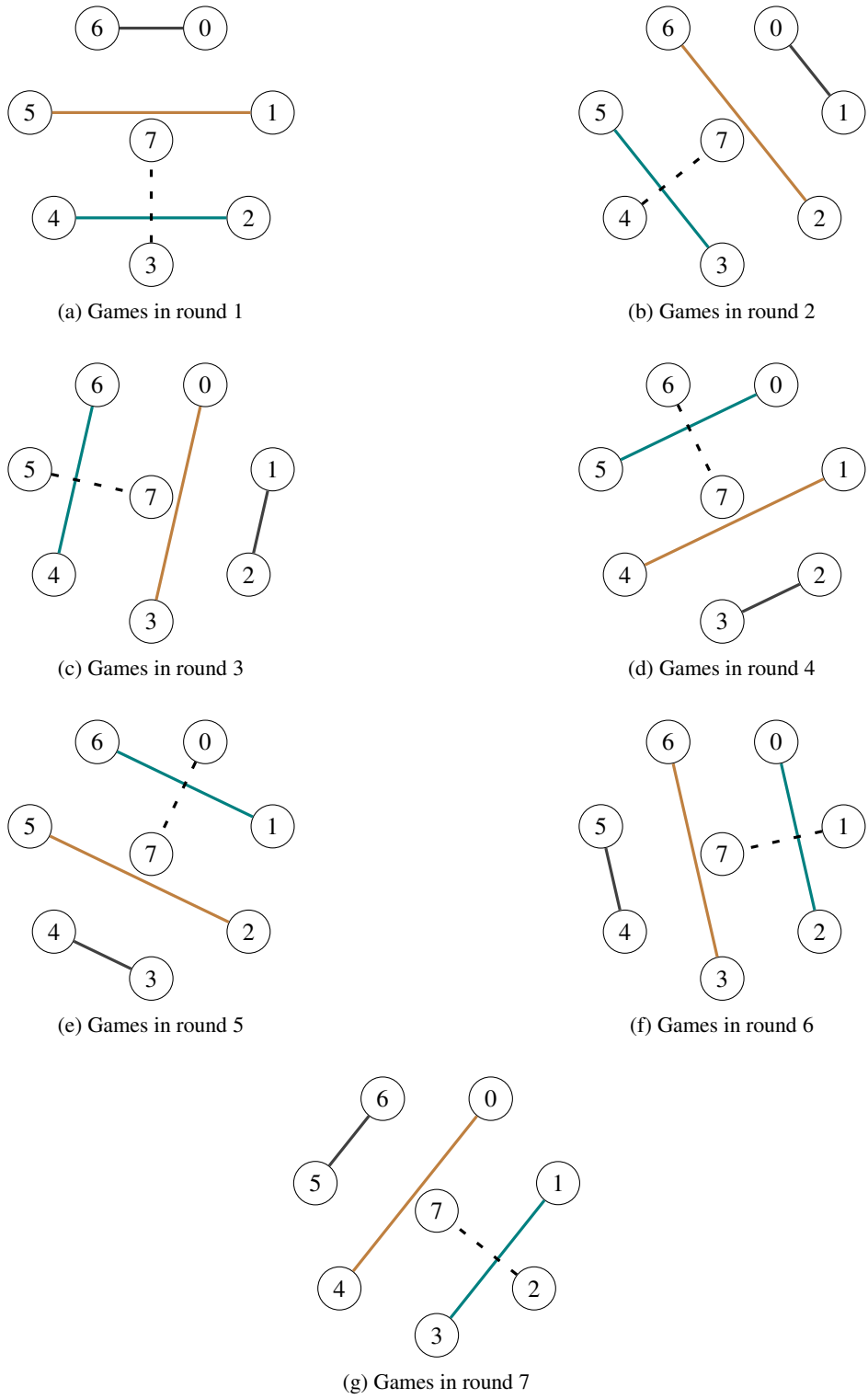
The matchings, generated by method~\eqref{eq:decomp} or method~\eqref{eq:decomp-rev}, partition the edge set of $K_8$ into $7$ perfect matchings.
Kirkman~\cite{Kirkman1847on} first proposed this framework for scheduling round-robin tournaments.
This approach uses a \emph{Kirkman matching} $M^\mathrm{kir}_n$ in $K^\bullet_n$ as
\begin{align}\label{eq:Kirkman-matching}
    M^\mathrm{kir}_n = \left\{\{i,n-1-i\} \mathrel{\Big|} \forall i \in \left\{0,1,\ldots, \floor*{\frac{n}{2}}-1\right\}\right\}.
\end{align}
We call a schedule generated from Kirkman matching a \emph{Kirkman schedule}.
Figure~\ref{fig:rtsp8} shows a Kirkman schedule with 8 teams.
A Kirkman matching $M^\mathrm{kir}_n$ has the following special properties:
\begin{Property}\label{pro:kick-RPM}
A Kirkman matching $M^\mathrm{kir}_n$ in $K^\bullet_n$ is an RPM if and only if $n=2$ or $n$ is odd.
\end{Property}
\begin{Property}\label{pro:kick-rev}
For an RPM $M$ in the graph $K^\bullet_n$,
if there exists $\alpha$ such that  $\mathrm{rot}(M,\alpha)=\mathrm{rev}(M)$, then there exists $\beta$ such that $M=\mathrm{rot}(M^\mathrm{kir}_n,\beta)$.
\end{Property}
Property~\ref{pro:kick-rev} indicates that Kirkman matching $M^\mathrm{kir}_n$ and its $\alpha$-rotated matchings are the only matchings that make decomposition~\eqref{eq:decomp} and decomposition~\eqref{eq:decomp-rev} the same.

\subsection{Normalization}
Although Kirkman scheduling is popular for generating schedules for round-robin tournaments in European soccer leagues~\cite{goossens2012soccer},
it can produce unbalanced or unfair schedules~\cite{miyashiro2006minimizing,lambrechts2018round}.
Other feasible solutions to the RTSP (non-Kirkman schedules) may thus be needed for the next scheduling stage.
To obtain different solutions by using method~\eqref{eq:decomp} and method~\eqref{eq:decomp-rev},
we first design an algorithm $\textsf{norm}(M)$ that normalizes an RPM $M$ using reverse and rotate operations.
Algorithm~\ref{alg:norm} shows details of this normalization.
We first clockwise rotate the matching $M$ until edge $\{0,n-1\}\in M$ meets (lines~\ref{line:norm-rot-start}--\ref{line:norm-rot-end}).
We then check each edge $\{i,j\}$ in $M$ in ascending order of their color indexes.
For the current edge $\{i,j\}$, if $i+j>n-1$, the normalization finishes and returns $\mathrm{rev}(M)$;
if $i+j<n-1$, the normalization ends with the current $M$ (lines~\ref{line:norm-rev-start}--\ref{line:norm-rev-end}).
The normalization procedure returns the current $M$ when all edges have been checked (line \ref{line:norm-end}).

\begin{algorithm} 
\caption{Normalize a rainbow perfect matching: $\textsf{norm}(M)$}
\label{alg:norm}
\begin{algorithmic}[1]
\REQUIRE{an RPM $M$.}
\STATE Let $\{i,j\}$ be the edge colored in $c_1$ in $M$ and $j>i$.\label{line:norm-rot-start}
\IF{$\{i,j\} \ne \{0,n-1\}$}
    \STATE $M \leftarrow \mathrm{rot}(M,-j)$.
\ENDIF\label{line:norm-rot-end}
\FOR{$k=2$ \TO $k=\floor*{\frac{n}{2}}$}\label{line:norm-rev-start}
    \STATE Let $\{i,j\}$ be the edge colored in $c_k$ in $M$.
    \IF{$i+j < n-1$}
        \STATE \textbf{return} $M$.
    \ELSIF{$i+j > n-1$} 
        \STATE \textbf{return} $\mathrm{rev}(M)$.
    \ENDIF
\ENDFOR\label{line:norm-rev-end}
\STATE \textbf{return} $M$.\label{line:norm-end}
\end{algorithmic}
\end{algorithm}

Properties~\ref{pro:rot} and \ref{pro:rev} ensure that if $M$ is an RPM in the graph $K^\bullet_n$, the normalization $\textsf{norm}(M)$ is still an RPM.
We call an RPM $M$ a \emph{normalized PRM} (\mbox{N-RPM}) if $\textsf{norm}(M) = M$.

Regarding N-RPMs in the graph $K^\bullet_n$, the following properties hold:
\begin{Property}\label{pro:norm-diff}
If two N-RPMs $M$ and $M'$ are different, $M'$ cannot be obtained from $M$ by using $\alpha$-rotate operators and reverse operators,
\end{Property}
\begin{Property}
A Kirkman matching $M^\mathrm{kir}_n$ is an N-RPM.
\end{Property}

\section{Rainbow perfect matchings in $K^\bullet_n$ with an even number of vertices}
Letting $k$ be a nonnegative integer, we show the results of considering the following two cases:
\begin{itemize}
    \item $n=8k$ or $n=8k+2$;
    \item $n=8k+4$ or $n=8k+6$.
\end{itemize}

\subsection{Non-existence of an RPM when $n=8k+4$ or $n=8k+6$}
We show the non-existence of RPM in the graph $K^\bullet_n$ for any $n\in\{8k+4,8k+6\}$.
\begin{Theorem}
   For any $k\in \mathbb{Z}_{\ge 0}$, no RPM exists in the graph $K^\bullet_n$ if $n\in\{8k+4,8k+6\}$.
\end{Theorem}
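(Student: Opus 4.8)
The plan is to argue by contradiction using a parity invariant on the endpoint sums of the edges of a hypothetical RPM. Write $n=2m$, so $m=\floor*{\frac{n}{2}}$ and an RPM $M$ in $K^\bullet_n$ must use each color $c_1,\dots,c_m$ exactly once; let $\{a_d,b_d\}$ denote the edge of $M$ colored $c_d$. The goal is to extract from the coloring rule~\eqref{def:coloring} a congruence that every RPM must satisfy, and then observe that it fails for $n\in\{8k+4,8k+6\}$.

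First I would record the local parity constraint. By~\eqref{def:coloring}, an edge colored $c_d$ joins two vertices whose difference, reduced into $\{1,\dots,n-1\}$, is either $d$ or $n-d$; since $n$ is even, $d\equiv n-d\pmod 2$, so in every case $a_d-b_d\equiv d\pmod 2$ and hence $a_d+b_d\equiv d\pmod 2$. Second, I would use that $M$ is a perfect matching: the multiset $\{a_1,b_1,\dots,a_m,b_m\}$ equals $\{0,1,\dots,n-1\}$, so $\sum_{d=1}^{m}(a_d+b_d)=\sum_{v=0}^{n-1}v=m(2m-1)$. Reducing modulo $2$ and substituting the local constraint gives $m(2m-1)\equiv\sum_{d=1}^{m}d=\frac{m(m+1)}{2}\pmod 2$, and since $2m-1$ is odd this collapses to $m\equiv\frac{m(m+1)}{2}\pmod 2$.

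The last step is a short case analysis modulo $4$. The triangular number $\frac{m(m+1)}{2}$ is odd exactly when $m\equiv 1$ or $2\pmod 4$, whereas $m$ is odd exactly when $m\equiv 1$ or $3\pmod 4$; hence the congruence $m\equiv\frac{m(m+1)}{2}\pmod 2$ fails precisely when $m\equiv 2$ or $3\pmod 4$, that is, when $n=2m\in\{8k+4,8k+6\}$. For such $n$ the existence of an RPM is therefore impossible, which is the claim.

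I do not expect a genuine obstacle here: the whole argument turns on identifying the correct invariant (parities of endpoint sums), after which only a mod-$2$ computation remains. The two points I would be careful about are, first, that the argument is not an overreach — for $n=8k$ and $n=8k+2$ one has $m\equiv 0$ or $1\pmod 4$, so the congruence holds and no contradiction arises, consistent with the existence result proved elsewhere in the paper; and second, that the color $c_{n/2}$ needs no special treatment, since a diametral edge $\{a,a+\tfrac{n}{2}\}$ has endpoint sum $\equiv \tfrac{n}{2}=m\pmod 2$, in agreement with the general claim $a_d+b_d\equiv d\pmod 2$.
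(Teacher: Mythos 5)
Your proof is correct and is essentially the paper's argument in different clothing: the key fact in both is that an edge colored $c_d$ joins vertices of equal parity exactly when $d$ is even (your congruence $a_d+b_d\equiv d \pmod 2$), and summing over a perfect matching forces the parity of the number of odd-labeled vertices to equal the parity of the number of odd color indices, which fails precisely for $n\equiv 4,6 \pmod 8$. The paper phrases this as a black/white count of vertices handled in two separate cases, while you package it as the single congruence $m\equiv \tfrac{m(m+1)}{2}\pmod 2$ with a mod-$4$ check, but the underlying invariant is the same.
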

\begin{proof}
We color all vertices in $K^\bullet_n$ using a function $\chi$: $V(K^\bullet_n) \rightarrow \{black, white\}$:
\begin{align}\label{fun:v-color}
  \chi(v) =
  \begin{cases}
    black & \text{if the label of $v$ is an odd},\\
    white & \text{if the label of $v$ is an even}.
  \end{cases}
\end{align}

First consider the case where $n=8k+4$.
Function $\chi$ obtains $4k+2$ black vertices and $4k+2$ white ones in $K^\bullet_{8k+4}$.
Assume there exists an RPM $M$ in $K^\bullet_{8k+4}$; that is, $M$ is a matching in $K^\bullet_{8k+4}$ containing colors $c_1, c_2, \ldots, c_{4k+2}$.
According to the edge coloring in~\eqref{def:coloring}, edges with colors $c_1, c_3, \ldots, c_{4k+1}$ in $M$ occupy $2k+1$ black vertices and $2k+1$ white ones,
because the endpoints of each edge are colored in different colors.
The remaining edges in $M$ with colors $c_2, c_4, \ldots, c_{4k+2}$ consume an even number of vertices in both white and black,
because the endpoints of each edge are same-colored.
This contradicts the fact that numbers of white and black vertices in $K^\bullet_{8k+4}$ are both even.

A similar result holds for the case of $n=8k+6$, where using function $\chi$ in~(\ref{fun:v-color}) obtains $4k+3$ black vertices and $4k+3$ white ones.
Assume there exists an RPM $M$ in $K^\bullet_{8k+6}$ containing colors $c_1, c_2, \ldots, c_{4k+3}$.
The edges in $M$ with colors $c_1,c_3,\ldots,c_{4k+3}$ account for $2k+2$ black vertices and $2k+2$ white ones because the endpoints of each edge are differently colored.
Edges colored $c_2,c_4,\ldots,c_{4k+2}$ in $M$ require an even number of both black and white vertices,
because the endpoints of each edge are same-colored.
This contradicts the fact that the numbers of white and black vertices in $K^\bullet_{8k+6}$ are both odd.
\end{proof}

\subsection{Existence of an RPM when $n=8k$ or $n=8k+2$}
First consider the case where $k=0$.
The existence of an RPM is obvious because $\emptyset$ and $\{\{0,1\}\}$ are the RPMs in $K^\bullet_0$ and $K^\bullet_2$, respectively.
We then focus on $n=8k$ and $n=8k+2$ with $k\ge 1$.
For such cases, we design the following matching $T_n$ in the graph $K^\bullet_n$:
\begin{align}\label{def:t}
  T_n = T'_n \cup T''_n \cup T'''_n \cup \bar{T}_n
\end{align}
where 
\begin{align}
\label{def:t1}    &T'_n = 
    \begin{cases}
        \left\{\{1+i, 8k-2-i\} \mid i = 0,1,\dots,2k-3\right\} & \text{if $n=8k$},\\
        \left\{\{1+i, 8k-i\} \mid i = 0,1,\dots,2k-2\right\} & \text{if $n=8k+2$};
    \end{cases}\\
\label{def:t2}    &T''_n = 
    \begin{cases}
        \left\{\{2k+i,6k-i\} \mid i = 0,1,\dots,k-1\right\} & \text{if $n=8k$},\\
        \left\{\{2k+1+i,6k+1-i\} \mid i = 0,1,\dots,k-1\right\} & \text{if $n=8k+2$};
    \end{cases}\\
\label{def:t3}    &T'''_n = 
    \begin{cases}
        \left\{\{3k+i,5k-2-i\} \mid i = 0,1,\dots,k-2\right\} & \text{if $n=8k$},\\
        \left\{\{3k+1+i,5k-1-i\} \mid i = 0,1,\dots,k-2\right\} & \text{if $n=8k+2$};
    \end{cases}\\
\label{def:tb}    &\bar{T}_n =
    \begin{cases}
        \left\{\{0,4k-1\},\{2k-1,8k-1\},\{5k-1,5k\}\right\} & \text{if $n=8k$},\\
        \left\{\{0,2k\},\{4k,8k+1\},\{5k,5k+1\}\right\} & \text{if $n=8k+2$}.
    \end{cases}
\end{align}
\begin{Theorem}
   For any $n=8k$ or $8k+2$ with $k\in \mathbb{Z}_{\ge 1}$, $T_n$ is an RPM in the graph $K^\bullet_n$.
\end{Theorem}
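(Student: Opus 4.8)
The plan is to verify the three defining conditions of an RPM directly: that $T_n$ is a matching, that it is a \emph{perfect} matching (covers every vertex of $K^\bullet_n$), and that it is rainbow. Since a perfect matching in $K^\bullet_n$ has $n/2$ edges while $K^\bullet_n$ uses exactly $n/2$ colors, once $T_n$ is known to be a perfect matching the rainbow condition becomes equivalent to the coloring $h$ restricted to $T_n$ being a surjection onto $\{c_1,\dots,c_{n/2}\}$, which is what I would establish. Throughout I would run the two residue classes $n=8k$ and $n=8k+2$ in parallel; the index ranges that become empty for small $k$ (for instance $T'_n=T'''_n=\varnothing$ when $k=1$, and one interval endpoint degenerating at $k=2$) are simply read as the empty set and cause no difficulty.

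For the matching and perfectness claims together, I would compute the vertex set incident to each of the four families. Each of $T'_n, T''_n, T'''_n$ has the shape $\{\{a+i,\,b-i\}: 0\le i\le m\}$, so its incident vertices form the disjoint union of the two intervals $\{a,\dots,a+m\}$ and $\{b-m,\dots,b\}$, and inside each such family the edges are manifestly pairwise disjoint. It then remains to observe that these six intervals, together with the six (distinct, strictly increasing) vertices of $\bar T_n$, tile $\{0,1,\dots,n-1\}$ without overlap. Concretely, for $n=8k$ the tiling is $\{0\},\{1,\dots,2k-2\},\{2k-1\},\{2k,\dots,3k-1\},\{3k,\dots,4k-2\},\{4k-1\},\{4k,\dots,5k-2\},\{5k-1,5k\},\{5k+1,\dots,6k\},\{6k+1,\dots,8k-2\},\{8k-1\}$, and there is a completely analogous partition for $n=8k+2$. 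This yields simultaneously that $T_n$ is a matching and that it covers every vertex.

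For the rainbow claim I would compute the color of every edge. For an edge $\{a+i,\,b-i\}$ the plain difference is $(b-a)-2i$, an arithmetic progression of fixed parity in $i$, and the color index is $\min\{(b-a)-2i,\ n-((b-a)-2i)\}$. A short check shows that within each of the three parametrized families the difference stays on a single side of $n/2$ — strictly above for $T'_n$, at or below for $T''_n$ and $T'''_n$ — so no branch switching occurs inside a family. Carrying this out, for $n=8k$ the families $T'_n, T''_n, T'''_n$ realize respectively the odd colors $3,5,\dots,4k-3$, the even colors $2k+2,\dots,4k$, and the even colors $2,\dots,2k-2$, while $\bar T_n$ contributes $c_1,c_{2k},c_{4k-1}$; the union is exactly $\{c_1,\dots,c_{4k}\}$. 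For $n=8k+2$ the same computation gives the odd colors $3,\dots,4k-1$ from $T'_n$, the even colors $2k+2,\dots,4k$ from $T''_n$, the even colors $2,\dots,2k-2$ from $T'''_n$, and $c_1,c_{2k},c_{4k+1}$ from $\bar T_n$, whose union is $\{c_1,\dots,c_{4k+1}\}$. In both cases all $n/2$ colors occur, hence each occurs exactly once, so $T_n$ is rainbow and the proof is complete.

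I do not expect a genuine obstacle here; the difficulty is entirely bookkeeping. One must keep the $8k$-versus-$8k+2$ split straight, remember that descending index ranges denote the empty set (which matters precisely at the smallest values of $k$), and confirm in each family which branch of $\min\{d,n-d\}$ is active and that the resulting progressions are exactly the claimed color sets and interval tilings. The write-up therefore splits naturally into two parallel blocks of elementary arithmetic verifications, one per residue class.
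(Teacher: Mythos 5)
Your proposal is correct and follows essentially the same route as the paper: the paper's proof is exactly this bookkeeping, tabulating for each of $T'_n$, $T''_n$, $T'''_n$ the covered vertices and colors (your interval tiling and color progressions match its tables), observing they are pairwise disjoint, and checking that $\bar T_n$ supplies the remaining vertices $0,2k-1,4k-1,5k-1,5k,8k-1$ (resp.\ $0,2k,4k,5k,5k+1,8k+1$) and colors $c_1,c_{2k},c_{4k-1}$ (resp.\ $c_1,c_{2k},c_{4k+1}$). One trivial slip in an aside: for $n=8k+2$ with $k=1$ the set $T'_{10}=\{\{1,8\}\}$ is nonempty (only $T'''$ degenerates there), but this does not affect the argument.
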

\begin{proof}
   Tables~\ref{tab:t123-8k} and \ref{tab:t123-8k2} summarize the features of $T'_n$, $T''_n$, and $T'''_n$ from \eqref{def:t1}--\eqref{def:t3}.
    The columns ``vertices'' and ``colors'' show the covered vertices and colors, respectively.
    \begin{table}[ht]
        \centering
        \caption{Features of $T'_n$, $T''_n$, and $T'''_n$ when $n=8k$}
        \label{tab:t123-8k}
        \begin{tabular}{llllr}
        \hline
        && \multicolumn{1}{c}{vertices} & \multicolumn{1}{c}{colors} & \multicolumn{1}{c}{size} \\ \hline
        $T'_n$      && $\{1,2,\ldots,2k-2\} \cup \{6k+1,6k+2,\ldots,8k-2\}$& $\{c_3,c_5,\ldots,c_{4k-3}\}$ & $2k-2$ \\
        $T''_n$     && $\{2k,2k+1,\ldots,3k-1\} \cup \{5k+1,5k+2,\ldots,6k\}$ & $\{c_{2k+2},c_{2k+4},\ldots,c_{4k}\}$ & $k$ \\
        $T'''_n$    && $\{3k,3k+1,\ldots,4k-2\} \cup \{4k,4k+1,\ldots,5k-2\}$ & $\{c_2,c_4,\ldots,c_{2k-2}\}$ & $k-1$ \\ \hline
        \end{tabular}
    \end{table}
    \begin{table}[ht]
        \centering
        \caption{Features of $T'_n$, $T''_n$ and $T'''_n$ when $n=8k+2$}
        \label{tab:t123-8k2}
        \begin{tabular}{llllr}
        \hline
        && \multicolumn{1}{c}{vertices} & \multicolumn{1}{c}{colors} & \multicolumn{1}{c}{size} \\ \hline
        $T'_n$      && $\{1,2,\ldots,2k-1\} \cup \{6k+2,6k+3,\ldots,8k\}$& $\{c_3,c_5,\ldots,c_{4k-1}\}$ & $2k-1$ \\
        $T''_n$     && $\{2k+1,2k+2,\ldots,3k\} \cup \{5k+2,5k+3,\ldots,6k+1\}$ & $\{c_{2k+2},c_{2k+4},\ldots,c_{4k}\}$ & $k$ \\
        $T'''_n$    && $\{3k+1,3k+2,\ldots,4k-1\} \cup \{4k+1,4k+2,\ldots,5k-1\}$ & $\{c_2,c_4,\ldots,c_{2k-2}\}$ & $k-1$ \\ \hline
        \end{tabular}
    \end{table}
    
    For the case where $n=8k$, Table~\ref{tab:t123-8k} shows that $T'_n$, $T''_n$, and $T'''_n$ share neither vertices nor colors, and that vertices
    \begin{align*}
        0, 2k-1, 4k-1, 5k-1, 5k, 8k-1
    \end{align*}
    and colors
    \begin{align*}
        c_1, c_{2k}, c_{4k-1}
    \end{align*}
    remain unmatched.
    Edge set $\bar{T}_{8k} = \{\{0,4k-1\},\{2k-1,8k-1\},\{5k-1,5k\}\}$ satisfies all the remaining requirements, so $T_n$ is an RPM.
    
    For the case where $n=8k+2$, the same result that $T_n$ is an RPM can be obtained from Table~\ref{tab:t123-8k2} and the definition of $\bar{T}_n$.
\end{proof}

Figure~\ref{fig:t-1618} shows the examples of $T_{16}$ and $T_{18}$.
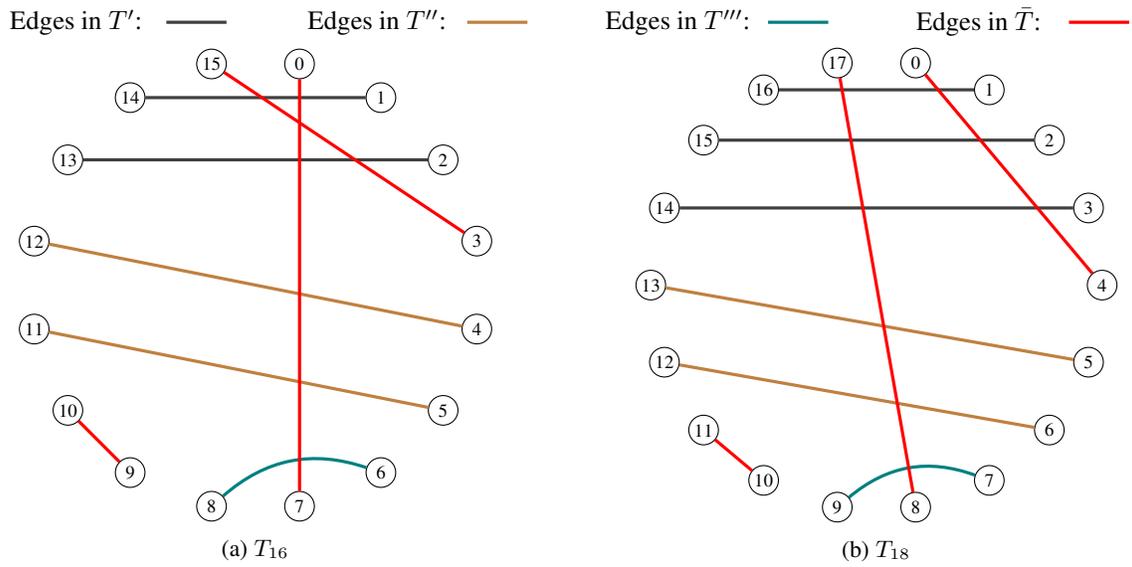
\begin{figure}[ht]
\centering
\begin{tikzpicture}
    \node at (-7, 0) {Edges in $T'$: };
    \draw[darkgray, very thick] (-5.8, 0) -- (-5, 0);
    \node at (-3, 0) {Edges in $T''$: };
    \draw[brown, very thick] (-1.8, 0) -- (-1, 0);
    \node at (1, 0) {Edges in $T'''$: };
    \draw[teal, very thick] (2.2, 0) -- (3, 0);
    \node at (5, 0) {Edges in $\bar{T}$: };
    \draw[red, very thick] (6.2, 0) -- (7, 0);
\end{tikzpicture}
\begin{subfigure}{.5\textwidth}
    \centering
    \begin{tikzpicture}
        \edef\N{16}
        \pgfmathparse{int(\N-1)}
        \edef\NM{\pgfmathresult}
        \pgfmathparse{int(\N/8)}
        \edef\k{\pgfmathresult}
        \foreach \n in {0,...,\NM}{
            \node[circle,draw,inner sep=0pt,scale=0.7,minimum size=16pt](\n) at (90-180/\N-360/\N*\n:3){\n}; 
        }
        \pgfmathparse{int(2*\k-3)}
        \edef\IEND{\pgfmathresult}
        \foreach \i in {0,...,\IEND}{
            \pgfmathparse{int(1+\i)}
            \edef\s{\pgfmathresult}
            \pgfmathparse{int(8*\k-2-\i)}
            \edef\t{\pgfmathresult}
            \draw (\s) edge[darkgray, very thick] (\t);
        }
        \pgfmathparse{int(\k-1)}
        \edef\IEND{\pgfmathresult}
        \foreach \i in {0,...,\IEND}{
            \pgfmathparse{int(2*\k+\i)}
            \edef\s{\pgfmathresult}
            \pgfmathparse{int(6*\k-\i)}
            \edef\t{\pgfmathresult}
            \draw (\s) edge[brown, very thick] (\t);
        }
        \pgfmathparse{int(\k-2)}
        \edef\IEND{\pgfmathresult}
        \foreach \i in {0,...,\IEND}{
            \pgfmathparse{int(3*\k+\i)}
            \edef\s{\pgfmathresult}
            \pgfmathparse{int(5*\k-2-\i)}
            \edef\t{\pgfmathresult}
            \draw (\s) edge[teal, very thick, bend right] (\t);
        }
        \draw (0) edge[red, very thick] (7);
        \draw (3) edge[red, very thick] (15);
        \draw (9) edge[red, very thick] (10);
    \end{tikzpicture}
    \caption{$T_{16}$}
\end{subfigure}%
\begin{subfigure}{.5\textwidth}
    \centering
    \begin{tikzpicture}
        \edef\N{18}
        \pgfmathparse{int(\N-1)}
        \edef\NM{\pgfmathresult}
        \pgfmathparse{int(\N/8)}
        \edef\k{\pgfmathresult}
        \foreach \n in {0,...,\NM}{
            \node[circle,draw,inner sep=0pt,scale=0.7,minimum size=16pt](\n) at (90-180/\N-360/\N*\n:3){\n}; 
        }
        \pgfmathparse{int(2*\k-2)}
        \edef\IEND{\pgfmathresult}
        \foreach \i in {0,...,\IEND}{
            \pgfmathparse{int(1+\i)}
            \edef\s{\pgfmathresult}
            \pgfmathparse{int(8*\k-\i)}
            \edef\t{\pgfmathresult}
            \draw (\s) edge[darkgray, very thick] (\t);
        }
        \pgfmathparse{int(\k-1)}
        \edef\IEND{\pgfmathresult}
        \foreach \i in {0,...,\IEND}{
            \pgfmathparse{int(2*\k+1+\i)}
            \edef\s{\pgfmathresult}
            \pgfmathparse{int(6*\k+1-\i)}
            \edef\t{\pgfmathresult}
            \draw (\s) edge[brown, very thick] (\t);
        }
        \pgfmathparse{int(\k-2)}
        \edef\IEND{\pgfmathresult}
        \foreach \i in {0,...,\IEND}{
            \pgfmathparse{int(3*\k+1+\i)}
            \edef\s{\pgfmathresult}
            \pgfmathparse{int(5*\k-1-\i)}
            \edef\t{\pgfmathresult}
            \draw (\s) edge[teal, very thick, bend right] (\t);
        }
        \draw (0) edge[red, very thick] (4);
        \draw (8) edge[red, very thick] (17);
        \draw (10) edge[red, very thick] (11);
    \end{tikzpicture}
    \caption{$T_{18}$}
\end{subfigure}
\caption{Examples of RPM matchings $T_{16}$ and $T_{18}$} \label{fig:t-1618}
\end{figure}

\section{Rainbow near-perfect matchings in $K^\bullet_n$ with an odd number of vertices}
Kirkman proposed Kirkman matching nearly 180 years ago. Property~\ref{pro:kick-RPM} indicates that an \mbox{N-RPM} exists in the graph $K^\bullet_n$ with an odd number of vertices.
However, even though multiple \mbox{N-RPMs} in the graph $K^\bullet_n$ with an odd number of vertices are required in real-world applications, it has not been confirmed whether \mbox{N-RPMs} other than Kirkman matching exist.

In this section, we first show the existence of an RPM through what we call \emph{arch-recursive-slide} (ARS) matching, whose \mbox{N-RPM} is different from the Kirkman matching when $n \in \{7,9,11,\ldots\}$.
We then propose an algorithm for generating multiple \mbox{N-RPMs} based on ARS matching.

\subsection{Arch-recursive-slide (ARS) matching}
In general, any odd number $n$ can be expressed as
\begin{align}
    & 8k+1, 8k+3, 8k+5, \mathrm{or\ } 8k+7 & k \in \mathbb{Z}_{\ge 0}.
\end{align} 
For the graph $K^\bullet_n$ with an odd number of vertices, we define the ARS matching $\Xi_n$ as 
\begin{align}\label{def:Xi}
      \Xi_n = \Xi'_n \cup \Xi''_n \cup \Xi'''_n
\end{align}
where 
\begin{align}\label{def:Xi1}
    \Xi'_n = 
    \begin{cases}
       \emptyset & \text{if $n=1$},\\
      \left\{\{i,2k-1-i\} \mid i = 0,1,\dots,k-1\right\} & \text{if $n=8k+1$ or $n=8k+3$}, \\ 
      \left\{\{i,2k+1-i\} \mid i = 0,1,\dots,k\right\} & \text{if $n=8k+5$ or $n=8k+7$};
    \end{cases}
\end{align}

\begin{align}\label{def:Xi2}
    \Xi''_n =
    \begin{cases}
       \emptyset & \text{if $n=1$},\\
        \left\{\{2k+i,4k+1+2i\} \mid i = 0,1,\dots,2k-1\right\} & \text{if $n=8k+1$},\\
        \left\{\{2k+i,4k+1+2i\} \mid i = 0,1,\dots,2k\right\} & \text{if $n = 8k+3$},\\
        \left\{\{2k+2+i,4k+4+2i\} \mid i = 0,1,\dots,2k\right\} & \text{if $n=8k+5$},\\
        \left\{\{2k+2+i,4k+4+2i\} \mid i = 0,1,\dots,2k+1\right\} & \text{if $n=8k+7$};
    \end{cases}
 \end{align}

 \begin{align}\label{def:Xi3}
    \Xi'''_n=
    \begin{cases}
       \emptyset & \text{if $n=1$},\\
       \left\{\{4k+2i,4k+2j\} \mid \{i,j\} \in \Xi_{2k+1} \right\} & \text{if $n=8k+1$},\\
       \left\{\{4k+2+2i,4k+2+2j\} \mid \{i,j\} \in \Xi_{2k+1} \right\} & \text{if $n=8k+3$},\\
       \left\{\{4k+3+2i,4k+3+2j\} \mid \{i,j\} \in \Xi_{2k+1}\right\} & \text{if $n=8k+5$},\\
       \left\{\{4k+5+2i,4k+5+2j\} \mid \{i,j\} \in \Xi_{2k+1} \right\} & \text{if $n=8k+7$}.\\
    \end{cases} 
 \end{align}

We show $\Xi_{33}$ as an example.
According to~\eqref{def:Xi1}--\eqref{def:Xi2}, we obtain
\begin{align}\label{val:Xi33-1}
   \Xi'_{33} = \left\{\{0,7\},\{1,6\},\{2,5\},\{3,4\}\right\},
\end{align}
\begin{align}\label{val:Xi33-2}
   \Xi''_{33} = \left\{\{8,17\},\{9,19\},\{10,21\},\{11,23\},\{12,25\},\{13,27\},\{14,29\},\{15,31\}\right\}.
\end{align}
From recursive formulation~\eqref{def:Xi3}, to obtain $\Xi'''_{33}$ we must compute $\Xi_{3}$ and $\Xi_{9}$ beforehand, as
\begin{align*}
\Xi_3   &= \Xi'_{3} \cup \Xi''_{3} \cup \Xi'''_{3}\\
        &= \emptyset \cup  \left\{\{0,1\}\right\} \cup \emptyset\\
        &= \left\{\{0,1\}\right\},\\
\Xi_9   &= \Xi'_{9} \cup \Xi''_{9} \cup \Xi'''_{9}\\
        &= \left\{\{0,1\}\right\} \cup \left\{\{2,5\},\{3,7\}\right\} \cup \left\{\{4+2i,4+2j\} \mid \{i,j\} \in \Xi_3 \right\}\\
        &= \left\{\{0,1\}\right\} \cup \left\{\{2,5\},\{3,7\}\right\} \cup  \left\{\{4,6\} \right\}\\
        &= \left\{\{0,1\},\{2,5\},\{3,7\},\{4,6\}\right\}
\end{align*}
Thus, by using~\eqref{def:Xi3}, we obtain $\Xi'''_{33}$ as
\begin{align}
\nonumber\Xi'''_{33}&= \left\{\{16+2i,16+2j\} \mid \{i,j\} \in \Xi_9 \right\}\\
\label{val:Xi33-3}  &= \left\{\{16,18\},\{20,26\},\{22,30\},\{24,28\}\right\}.
\end{align}
Finally, $\Xi_{33}$ is formed by~\eqref{val:Xi33-1}--\eqref{val:Xi33-3}:
\begin{align}
\nonumber   \Xi_{33}    &= \Xi'_{33} \cup \Xi''_{33} \cup \Xi'''_{33}\\
\nonumber               &= \left\{\{0,7\},\{1,6\},\{2,5\},\{3,4\},\{8,17\},\{9,19\},\{10,21\},\{11,23\},\{12,25\},\{13,27\},\{14,29\},\{15,31\}\right.\\
\label{val:Xi-33}                        &\hspace{4mm}\left.\{16,18\},\{20,26\},\{22,30\},\{24,28\}\right\}.
\end{align}
The ARS matching $\Xi_{33}$ is an RPM for the graph $K^\bullet_{33}$ because the edges in~\eqref{val:Xi-33} share no common vertices and cover all colors.
Figure~\ref{fig:Xi-33} shows $\Xi_{33}$ and its corresponding \mbox{N-RPM}.
\begin{figure}[ht]
\centering
\begin{tikzpicture}
    \node at (-8, 0) {Edges in $\Xi'_{33}$: };
    \draw[darkgray, very thick] (-6.8, 0) -- (-6, 0);
    \node at (-3, 0) {Edges in $\Xi''_{33}$: };
    \draw[brown, very thick] (-1.8, 0) -- (-1, 0);
    \node at (2, 0) {Edges in $\Xi'''_{33}$: };
    \draw[teal, very thick] (3.2, 0) -- (4, 0);
\end{tikzpicture}
\begin{subfigure}{.5\textwidth}
    \centering
    \begin{tikzpicture}
        \edef\N{33}
        \pgfmathparse{int(\N-1)}
        \edef\NM{\pgfmathresult}
        \pgfmathparse{int(\N/8)}
        \edef\k{\pgfmathresult}
        \foreach \n in {0,...,\NM}{
            \node[circle,draw,inner sep=0pt,scale=0.7,minimum size=16pt,fill=white](\n) at (90-180/\N-360/\N*\n:3){\n}; 
        }
        \pgfmathparse{int(\k-1)}
        \edef\IEND{\pgfmathresult}
        \foreach \i in {0,...,\IEND}{
            \pgfmathparse{int(2*\k-1-\i)}
            \edef\j{\pgfmathresult}
            \draw (\i) edge[darkgray, very thick, bend right] (\j);
        }
        \pgfmathparse{int(2*\k-1)}
        \edef\LEND{\pgfmathresult}
        \foreach \l in {0,...,\LEND}{
            \pgfmathparse{int(2*\k+\l)}
            \edef\i{\pgfmathresult}
            \pgfmathparse{int(4*\k+1+2*\l)}
            \edef\j{\pgfmathresult}
            \draw (\i) edge[brown, very thick] (\j);
        }
        \draw (16) edge[teal, very thick, bend right] (18);
        \draw (20) edge[teal, very thick, bend right] (26);
        \draw (22) edge[teal, very thick, bend right] (30);
        \draw (24) edge[teal, very thick, bend right] (28);
    \end{tikzpicture}
    \caption{ARS matching $\Xi_{33}$}
\end{subfigure}%
\begin{subfigure}{.5\textwidth}
    \centering
    \begin{tikzpicture}
        \edef\N{33}
        \pgfmathparse{int(\N-1)}
        \edef\NM{\pgfmathresult}
        \pgfmathparse{int(\N/8)}
        \edef\k{\pgfmathresult}
        \foreach \n in {0,...,\NM}{
            \node[circle,draw,inner sep=0pt,scale=0.7,minimum size=16pt,fill=white](\n) at (90-180/\N-360/\N*\n:3){\n}; 
        }
        \pgfmathparse{int(\k-1)}
        \edef\LEND{\pgfmathresult}
        \foreach \i in {0,...,\LEND}{
            \pgfmathparse{int(\N-1-\i)}
            \edef\j{\pgfmathresult}
            \draw (\i) edge[darkgray, very thick, bend left] (\j);
        }
        \pgfmathparse{int(2*\k-1)}
        \edef\LEND{\pgfmathresult}
        \foreach \l in {0,...,\LEND}{
            \pgfmathparse{int(\k+\l)}
            \edef\i{\pgfmathresult}
            \pgfmathparse{int(3*\k+1+2*\l)}
            \edef\j{\pgfmathresult}
            \draw (\i) edge[brown, very thick] (\j);
        }
        \draw (12) edge[teal, very thick, bend right] (14);
        \draw (16) edge[teal, very thick, bend right] (22);
        \draw (18) edge[teal, very thick, bend right] (26);
        \draw (20) edge[teal, very thick, bend right] (24);
    \end{tikzpicture}
    \caption{The \mbox{N-RPM} of $\Xi_{33}$}
\end{subfigure}
\caption{ARS matching $\Xi_{33}$ and its \mbox{N-RPM}} \label{fig:Xi-33}
\end{figure}
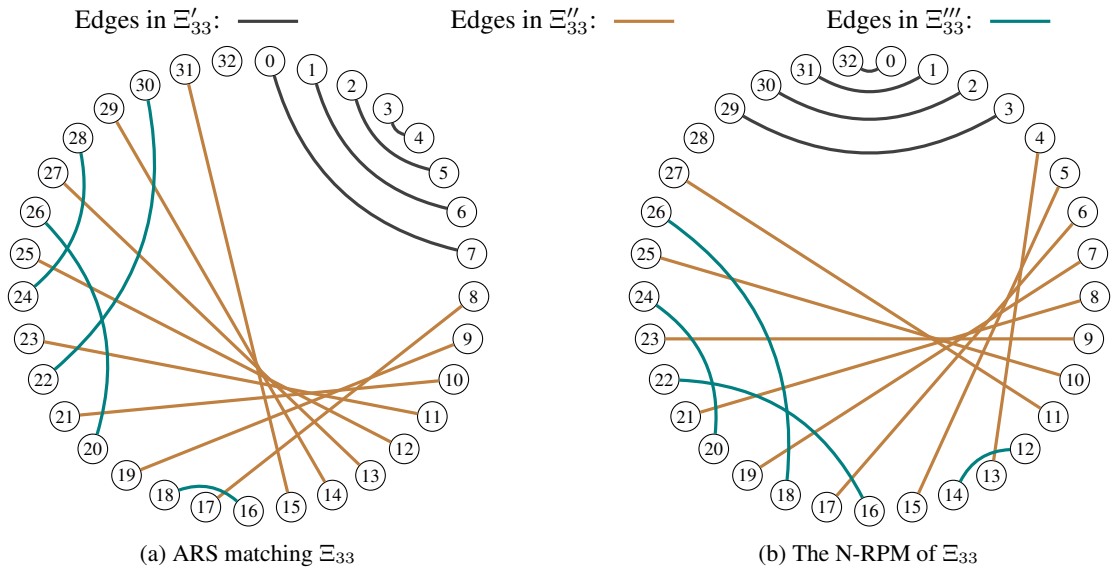

In this paper, we call an RPM $M$ in $K^\bullet_n$ a \emph{cuttable RPM} if 
\begin{align}\label{cdt:cuttable}
    \forall \{i,j\}\in M, |i-j| \le n-|i-j|
\end{align}
holds.
A necessary and sufficient condition for \eqref{cdt:cuttable} is 
\begin{align}
    \forall \{i,j\}\in M, |i-j|\le \frac{n}{2}.
\end{align}
Note that Kirkman matching $M^\mathrm{kir}_n$ is not cuttable where $n\ge 3$, but for any $n\in \mathbb{Z}_{\ge 0}$, matchings
\begin{align}\label{c-prm}
    \mathrm{rot}\left(M^\mathrm{kir}_n,\floor*{\frac{n+1}{4}}\right), \mathrm{rot}\left(M^\mathrm{kir}_n,-\floor*{\frac{n+1}{4}}\right)
\end{align} are cuttable RPMs.
Using this definition, we can prove that $\Xi_n$ is an RPM:
\begin{Theorem}\label{the:RPM}
For any odd number $n$, ARS matching $\Xi_n$ is an RPM in the graph $K^\bullet_n$.
\end{Theorem}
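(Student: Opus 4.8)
The plan is to prove, by induction on the odd integer $n$, the slightly stronger assertion that $\Xi_n$ is a \emph{cuttable} RPM in $K^\bullet_n$. Carrying cuttability through the induction is essential: the block $\Xi'''_n$ is built by embedding $\Xi_{2k+1}$ into $K^\bullet_n$ through a map $i\mapsto a+2i$, where $a\in\{4k,\,4k+2,\,4k+3,\,4k+5\}$ is the offset appearing in~\eqref{def:Xi3}, and this map doubles every vertex difference. An edge $\{i,j\}$ of $\Xi_{2k+1}$ has color $c_{\min\{|i-j|,\,2k+1-|i-j|\}}$ in $K^\bullet_{2k+1}$, while its image $\{a+2i,a+2j\}$ has color $c_{\min\{2|i-j|,\,n-2|i-j|\}}$ in $K^\bullet_n$; these agree up to the factor $2$ exactly when $|i-j|\le k$, i.e.\ when the original edge already uses the short way around the cycle. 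Cuttability of $\Xi_{2k+1}$ is precisely the statement that $|i-j|\le k$ for every edge, and then $2|i-j|\le 2k<n/2$, so the embedded block both realizes the color $c_{2|i-j|}$ for each original edge of color $c_{|i-j|}$ and is itself cuttable. I would isolate this distance-doubling observation as a short lemma, since it is the only genuinely substantive point in the argument.

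The base case is $n=1$, where $\Xi_1=\emptyset$. For the inductive step write $n=8k+r$ with $r\in\{1,3,5,7\}$; then $2k+1$ is an odd integer smaller than $n$ (for $n\in\{3,5,7\}$ the recursive term degenerates to $\Xi_1=\emptyset$), so by the inductive hypothesis $\Xi_{2k+1}$ is a cuttable RPM, i.e.\ a family of $k$ vertex-disjoint edges on $\{0,\dots,2k\}$ whose colors are exactly $c_1,\dots,c_k$, each of difference at most $k$. I would then, separately for each residue $r$ and in the style of the proof that $T_n$ is an RPM, describe the three blocks. The ``arch'' block $\Xi'_n$ occupies an initial interval of vertices ($\{0,\dots,2k-1\}$ if $r\in\{1,3\}$, and $\{0,\dots,2k+1\}$ if $r\in\{5,7\}$) and, since its edges $\{i,2k-1-i\}$ (resp.\ $\{i,2k+1-i\}$) have differences $1,3,5,\dots$ up to $2k-1$ (resp.\ $2k+1$), realizes the odd colors in that range. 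The ``slide'' block $\Xi''_n$ has edges whose differences form a block of consecutive integers whose maximum is still strictly below $n/2$, so each difference equals its circular distance, which gives a consecutive run of colors immediately above those of $\Xi'_n$, while its endpoints sweep a middle interval of vertices together with every second vertex of the top interval. Finally $\Xi'''_n$, by the lemma above, realizes the even colors $c_2,c_4,\dots,c_{2k}$ on a $2k$-element subset of the complementary-parity vertices of the top interval.

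Assembling the three blocks, I would verify that their vertex sets are pairwise disjoint — $\Xi'_n$ lies strictly below everything touched by $\Xi''_n$ and $\Xi'''_n$, and on the top interval $\Xi''_n$ and $\Xi'''_n$ use opposite parities — so that $\Xi_n$ has exactly $\lfloor n/2 \rfloor$ edges and leaves a single vertex uncovered; and that the colors of the three blocks, namely the leading odd colors (up to $c_{2k-1}$ or $c_{2k+1}$), the even colors $c_2,\dots,c_{2k}$, and the top consecutive run from $\Xi''_n$, together partition $\{c_1,\dots,c_{\lfloor n/2 \rfloor}\}$; cuttability of $\Xi_n$ is then immediate because every difference exhibited is at most $n/2$. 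The main obstacle is purely bookkeeping: in each of the four residue classes the endpoints of the three intervals and the exact color ranges shift, so one must check case by case that the middle interval covered by $\Xi''_n$ abuts the top interval handled by $\Xi'''_n$ with neither gap nor overlap, and likewise for the color ranges. Once the distance-doubling lemma is in hand, these verifications are routine but must be carried out for all four cases.
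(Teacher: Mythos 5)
Your proposal is correct and follows essentially the same route as the paper: strengthen the statement to ``cuttable RPM,'' induct on odd $n$ (the paper phrases this as induction on $t$ covering $\{1,3,\ldots,8t-1\}$), and use exactly the paper's key observation---cuttability of $\Xi_{2k+1}$ forces its edge differences to be $\{1,\ldots,k\}$, so the doubled embedding realizes precisely the even colors $c_2,\ldots,c_{2k}$ while staying below $n/2$---together with the same case-by-case vertex/color bookkeeping that the paper records in its tables. No substantive gap; your block descriptions and parity/disjointness claims match the paper's Tables~\ref{tab:Xi12} and~\ref{tab:Xi3}.
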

\begin{proof}
   The proof is by mathematical induction. For any $t\in \mathbb{Z}_{\ge 1}$, let $P(t)$ denote the statement that $\Xi_n$ is a cuttable RPM for each $n\in \{1,3,5,\ldots,8t-1\}$.
   \\
   {\bf Base step ($t=1$):} $P(1)$ is true because according to~\eqref{def:Xi1}--\eqref{def:Xi3}
   \begin{align*}
       \Xi_1&=\emptyset,\\
       \Xi_3&=\{\{0,1\}\},\\
       \Xi_5&=\{\{0,1\}, \{2,4\}\},\\
       \Xi_7&=\{\{0,1\}, \{2,4\},\{3,6\}\}.
   \end{align*}
   Each of these is an RPM in the corresponding graph $K^\bullet_n$, all involving edges $\{i, j\}$, satisfy $|i-j|\le n/2$.\\
   {\bf Inductive step $P(t)\rightarrow P(t+1)$:} Fix some $t\ge 1$, assuming that
   \begin{Assumption}\label{Ass:Pk}
   for any $n\in \{1,3,5,\ldots,8t-1\}$, $\Xi_n$ is a cuttable RPM.
   \end{Assumption}
   To prove $P(t+1)$, we must show that for any $n\in \{8t+1, 8t+3, 8t+5, 8t+7\}$, $\Xi_n$ is a cuttable RPM in the graph $K^\bullet_n$.
   Table~\ref{tab:Xi12} summarizes the features of $\Xi'_n$ and $\Xi''_n$ according to \eqref{def:Xi1}--\eqref{def:Xi2}, such as covered vertices (in the column ``vertices''), covered colors (in the column ``colors''), and size.
    \begin{table}[ht]
        \centering \fontsize{8pt}{9pt}\selectfont
        \setlength{\tabcolsep}{3.4pt}
        \caption{Features of $\Xi'_n$ and $\Xi''_n$}
        \label{tab:Xi12}
        \begin{tabular}{cccccc}
        \hline
                                && \multicolumn{4}{c}{$\Xi'_n$}\\ \cline{3-6}
        \multicolumn{1}{c}{$n$} && \multicolumn{1}{c}{vertices} & \multicolumn{1}{c}{colors} & $|\Xi'_n|$ & $\max_{\{i,j\}\in \Xi'_n} |i-j|$ \\ \hline
        $8k+1$                  && $\{0,1,\ldots,2k-1\}$ & $\{c_1,c_3,\ldots,c_{2k-1}\}$ & $k$ & $2k-1$\\
        $8k+3$                  && $\{0,1,\ldots,2k-1\}$ & $\{c_1,c_3,\ldots,c_{2k-1}\}$ & $k$ & $2k-1$\\
        $8k+5$                  && $\{0,1,\ldots,2k+1\}$ & $\{c_1,c_3,\ldots,c_{2k+1}\}$ & $k+1$ & $2k+1$\\
        $8k+7$                  && $\{0,1,\ldots,2k+1\}$ & $\{c_1,c_3,\ldots,c_{2k+1}\}$ & $k+1$ & $2k+1$ \\
        \hline
                                && \multicolumn{4}{c}{$\Xi''_n$}\\ \cline{3-6}
        \multicolumn{1}{c}{$n$} && \multicolumn{1}{c}{vertices} & \multicolumn{1}{c}{colors} & $|\Xi''_n|$ & $\max_{\{i,j\}\in \Xi''_n} |i-j|$ \\ \hline
        $8k+1$                  && $\{2k,2k+1,\ldots,4k-1\} \cup \{4k+1,4k+3,\ldots,8k-1\}$ & $\{c_{2k+1},c_{2k+2},\ldots,c_{4k}\}$ & $2k$ & $4k$\\
        $8k+3$                  && $\{2k,2k+1,\ldots,4k\} \cup \{4k+1,4k+3,\ldots,8k+1\}$ & $\{c_{2k+1},c_{2k+2},\ldots,c_{4k+1}\}$ & $2k+1$ & $4k+1$\\
        $8k+5$                  && $\{2k+2,2k+3,\ldots,4k+2\} \cup \{4k+4,4k+6,\ldots,8k+4\}$ & $\{c_{2k+2},c_{2k+3},\ldots,c_{4k+2}\}$ & $2k+1$ & $4k+2$\\
        $8k+7$                  && $\{2k+2,2k+3,\ldots,4k+3\} \cup \{4k+4,4k+6,\ldots,8k+6\}$ & $\{c_{2k+2},c_{2k+3},\ldots,c_{4k+3}\}$ & $2k+2$ & $4k+3$\\
        \hline
        \end{tabular}
    \end{table}
    Table~\ref{tab:Xi12} shows that $\Xi'_n$ and $\Xi''_n$ share neither common vertices nor colors.
    The values in the last column indicate that statement $|i-j| \le n/2$ holds for all edges $\{i,j\}\in \Xi'_n \cup \Xi''_n$.
    Table~\ref{tab:Xi3} lists the unmatched vertices and colors to be considered in $\Xi'''_{8t+1}, \Xi'''_{8t+3}, \Xi'''_{8t+5}$, and $\Xi'''_{8t+7}$.
    \begin{table}[ht]
        \centering
        \caption{Unmatched vertices and colors}
        \label{tab:Xi3}
        \begin{tabular}{llll}
        \hline
        \multicolumn{1}{c}{$n$} && \multicolumn{1}{c}{vertices} & \multicolumn{1}{c}{colors}\\ \hline
        $8t+1$                  && $\{4t,4t+2,\ldots,8t\}$ & $\{c_{2},c_{4},\ldots,c_{2t}\}$ \\
        $8t+3$                  && $\{4t+2,4t+4,\ldots,8t+2\}$ & $\{c_{2},c_{4},\ldots,c_{2t}\}$ \\
        $8t+5$                  && $\{4t+3,4t+5,\ldots,8t+3\}$ & $\{c_{2},c_{4},\ldots,c_{2t}\}$ \\
        $8t+7$                  && $\{4t+5,4t+7,\ldots,8t+5\}$ & $\{c_{2},c_{4},\ldots,c_{2t}\}$ \\ \hline
        \end{tabular}
    \end{table}
    From $t\ge 1$ and Assumption~\ref{Ass:Pk}, $\Xi_{2t+1}$ is a cuttable RPM for the graph $K^\bullet_{2t+1}$, using $2t$ distinct vertices in $\{0,1,\ldots,2t\}$.
    Thus, $\Xi'''_{8t+1}$, $\Xi'''_{8t+3}$, $\Xi'''_{8t+5}$, and $\Xi'''_{8t+7}$ constructed by~\eqref{def:Xi3} cover $2t$ distinct vertices in the column ``vertices'' in Table~\ref{tab:Xi3}.
    
    We next show that all colors in the column ``colors'' are covered.
    Assumption~\ref{Ass:Pk} guarantees that
    \begin{align}\label{eq:key-cover}
        \{ |i-j| \mid \{i,j\} \in \Xi_{2t+1} \} = \{ 1, 2, \ldots, t\},
    \end{align}
    so $\Xi'''_{8t+1}$, $\Xi'''_{8t+3}$, $\Xi'''_{8t+5}$, and $\Xi'''_{8t+7}$ constructed by~\eqref{def:Xi3} cover all colors in the column ``colors'' in Table~\ref{tab:Xi3}.
    Note that~\eqref{eq:key-cover} also indicates that
    \begin{align*}
        |i-j| \le 2t \le n/2
    \end{align*}
    holds for each $\{i,j\}$ in $\Xi'''_{8t+1}$, $\Xi'''_{8t+3}$, $\Xi'''_{8t+5}$, and $\Xi'''_{8t+7}$.

    Therefore, for each $n\in\{8t+1, 8t+3, 8t+5,8t+7\}$, $\Xi_n$ is an RPM in $K^\bullet_n$, and $|i-j|\le n/2$ holds for all edges $\{i,j\}$ in $\Xi_n$.
\end{proof}

For $n\in \{1,3,5\}$, all RPMs in the graph $K^\bullet_n$ are rooted from the same \mbox{N-RPM}.
However, if $n\in \{7,9,\ldots\}$, the \mbox{N-RPM} of ARS matching is different from Kirkman matching $M^\mathrm{kir}_n$ in the graph $K^\bullet_n$:
\begin{Property}
For any $n \in \{7,9,11,\ldots\}, \textsf{norm}(\Xi_n) \ne M^\mathrm{kir}_n$.
\end{Property}
\begin{proof}
From the definition of $\Xi''_n$ in~\eqref{def:Xi2}, $|\Xi''_n| \ge 2$ holds when $n \in \{7,9,\ldots\}$.
When $n = 8k+1$ or $n = 8k+3$, edges $\{2k,4k+1\}$ and $\{2k+1,4k+3\}$ belong to $\Xi''_n$.
If we arrange vertices $0, 1, \ldots, n-1$ around a cycle in clockwise order, these two edges cross each other,
and this cannot be changed by applying reverse and rotation operators.
However, all edges in Kirkman matching $M^\mathrm{kir}_n$ are parallel,
so the \mbox{N-RPM} of ARS matching is different from $M^\mathrm{kir}_n$ if $n = 8k+1$ or $n = 8k+3$.
For $n = 8k+5$ or $n = 8k+7$, the same holds for edges $\{2k+2, 4k+4\}$ and $\{2k+3, 4k+6\}$ in $\Xi''_n$.
\end{proof}

\subsection{Generating many \mbox{N-RPMs} based on ARS matching}
We showed in the previous subsection that the ARS matching $\Xi_n = \Xi'_n \cup \Xi''_n \cup \Xi'''_n$ is an RPM in $K^\bullet_n$ if $n$ is odd.
In this subsection, we propose a method for generating many \mbox{N-RPMs}, based on the idea that valid variants of $\Xi'''_{2k+1}$ lead to RPMs whose N-RPMs are different.

Given an RPM $M$ in $K^\bullet_n$ where $n$ is odd, we consider the following functions $f(M)$ and $g(M)$:
\begin{align}
    f(M) =
    \begin{cases}
    \mathrm{rot}(M,-2k)&\text{if $n=8k+1$ or $n=8k+3$},\\
    \mathrm{rot}(M,-2k-2)&\text{if $n=8k+5$ or $n=8k+7$};
    \end{cases}\\
    g(M) =
    \begin{cases}
    \mathrm{rot}(\mathrm{rev}(M),2k)&\text{if $n=8k+1$ or $n=8k+3$},\\
    \mathrm{rot}(\mathrm{rev}(M),2k+2)&\text{if $n=8k+5$ or $n=8k+7$}.
    \end{cases}
\end{align}
From Property~\ref{pro:rot}, Property~\ref{pro:rev}, and Theorem~\ref{the:RPM}, $f(\Xi_n)$, rev$(\Xi_n)$, and $g(\Xi_n)$ are RPMs in $K^\bullet_n$.
Note that the four RPMs $\Xi_n$, $f(\Xi_n)$, rev($\Xi_n$), and $g(\Xi_n)$ are all cuttable, and each is rooted from the same \mbox{N-RPM} by Property~\ref{pro:norm-diff}.

We demonstrate how to generate many \mbox{N-RPMs} by example.
In~\eqref{val:Xi-33}, we showed that $\Xi_{33}=\Xi'_{33} \cup \Xi''_{33} \cup \Xi'''_{33}$ is an RPM in $K^\bullet_{33}$.
We used $\Xi_9$ in constructing $\Xi'''_{33}$.
If we use $f(\Xi_9)$, rev$(\Xi_9)$ and $g(\Xi_9)$ instead of $\Xi_9$ when constructing $\Xi'''_{33}$, RPMs whose \mbox{N-RPMs} are different can be obtained for $K^\bullet_{33}$.
Note that other cuttable RPMs such as $\mathrm{rot}(M^\mathrm{kir}_9,2)$ and $\mathrm{rot}(M^\mathrm{kir}_9,-2)$ in~\eqref{c-prm} are also valid alternatives to $\Xi_9$.

Let $\Xi'''_n(M)$ denote 
\begin{align}
    \Xi'''_n(M)=
    \begin{cases}
       \left\{\{4k+2i,4k+2j\} \mid \{i,j\} \in M \right\} & \text{if $n=8k+1$},\\
       \left\{\{4k+2+2i,4k+2+2j\} \mid \{i,j\} \in M \right\} & \text{if $n=8k+3$},\\
       \left\{\{4k+3+2i,4k+3+2j\} \mid \{i,j\} \in M\right\} & \text{if $n=8k+5$},\\
       \left\{\{4k+5+2i,4k+5+2j\} \mid \{i,j\} \in M \right\} & \text{if $n=8k+7$}.\\
    \end{cases} 
\end{align}
We design a collection of RPMs $F_n$ in the graph $K^\bullet_n$ as
\begin{align}\label{def:Fn}
    &F_n = \\
    &\begin{cases}
       \left\{\emptyset \right\} & \text{if $n=1$},\\
        \{\Xi'_n \cup \Xi''_n \cup \Xi'''_n(M)
        \mid M \in F_{2k+1}\}\\
        \ \ \cup \{\Xi'_n \cup \Xi''_n \cup \Xi'''_n(f(M))
        \mid M \in F_{2k+1}\}\\
        \ \ \cup \{\Xi'_n \cup \Xi''_n \cup \Xi'''_n(\mathrm{rev}(M))
        \mid M \in F_{2k+1}\}\\
        \ \ \cup \{\Xi'_n \cup \Xi''_n \cup \Xi'''_n(g(M))
        \mid M \in F_{2k+1}\} & \text{if $n=8k+1,8k+3,8k+5,8k+7$}.
    \end{cases} 
\end{align}

Using this method, we obtain $\Theta(n)$ different \mbox{N-RPM}s in the graph $K^\bullet_n$, where $n$ is odd.

The following confirms the size of $F_n$ in detail.
For $n=1$, we obtain $|F_n|=1$ from~\eqref{def:Fn};
For $n=3,5,7$, we construct $F_n$ based on $F_1=\{\emptyset\}$.
Since $\emptyset = \mathrm{rev}(\emptyset) = f(\emptyset) = g(\emptyset)$,
$F_n=\{\Xi_n\}$ holds for $n=3,5,7$.
When $M=\Xi_3$ or $M=\Xi_5$, since $M = g(M)$ and $\mathrm{rev}(M) = f(M)$ hold
for $n=9,11,\ldots,23$, $F_n$ based on $\Xi_3, \Xi_5$ has 2 elements.
For the other $n=8k+i, i\in \{1,3,5,7\}$, $|F_n| = 4|F_{2k+1}|$ holds.

From the observations above, we can enumerate the size of $F_n$ as in Table~\ref{tab:Fn},
where values in the column ``$|F_n|$'' were confirmed by computational experiments.
    \begin{table}[ht]
        \centering
        \caption{The size of $F_n$ confirmed by computational experiments}
        \label{tab:Fn}
        \begin{tabular}{llll}
        \hline
        \multicolumn{1}{c}{$n$} && \multicolumn{1}{c}{$2k+1$} & \multicolumn{1}{c}{$|F_n|$}\\ \hline
        $1,3,5,7$                  && $1$ & 1 \\
        $9,11,\ldots,23$                  && $3,5$ & 2 \\
        $25,27,\ldots,31$                  && $7$ & 4 \\
        $33,35,\ldots,95$                  && $9,11,\ldots,23$ & 8 \\
        $97,99,\ldots,127$                  && $25,27,\ldots,31$ & 16 \\
        $129,131,\ldots,383$                  && $33,35,\ldots,95$ & 32 \\
        $385,387,\ldots,511$                  && $97,99,\ldots,127$ & 64 \\
        $513,515,\ldots,1535$                  && $129,131,\ldots,383$ & 128 \\
        $1537,1539,\ldots,2047$                  && $385,387,\ldots,511$ & 256 \\
        $2049,2051,\ldots,6143$                  && $513,515,\ldots,1535$ & 512 \\ \hline
        \end{tabular}
    \end{table}

\section*{Acknowledgement}
This work was supported by JSPS KAKENHI [Grant No.19K11843].

\bibliographystyle{unsrt}
\bibliography{references}

\begin{thebibliography}{10}

\bibitem{garey1979computers}
Michael~R Garey and David~S Johnson.
\newblock {\em Computers and intractability}, volume 174.
\newblock freeman San Francisco, 1979.

\bibitem{perarnau2013rainbow}
Guillem Perarnau and Oriol Serra.
\newblock Rainbow perfect matchings in complete bipartite graphs: existence and
  counting.
\newblock {\em Combinatorics, Probability and Computing}, 22(5):783--799, 2013.

\bibitem{cano2016rainbow}
Mar{\'\i}a del~Pilar Cano~Vila, Guillem Perarnau, and Oriol Serra~Alb{\'o}.
\newblock Rainbow perfect matchings in r-partite graph structures.
\newblock {\em Electronic notes in discrete mathematics}, 54:193--198, 2016.

\bibitem{coulson2019rainbow}
Matthew Coulson and Guillem Perarnau.
\newblock Rainbow matchings in dirac bipartite graphs.
\newblock {\em Random Structures \& Algorithms}, 55(2):271--289, 2019.

\bibitem{bal2017rainbow}
Deepak Bal, Patrick Bennett, Xavier P{\'e}rez-Gim{\'e}nez, and Pawe{\l}
  Pra{\l}at.
\newblock Rainbow perfect matchings and hamilton cycles in the random geometric
  graph.
\newblock {\em Random Structures \& Algorithms}, 51(4):587--606, 2017.

\bibitem{bal2016rainbow}
Deepak Bal and Alan Frieze.
\newblock Rainbow matchings and hamilton cycles in random graphs.
\newblock {\em Random Structures \& Algorithms}, 48(3):503--523, 2016.

\bibitem{wang2008heterochromatic}
Guanghui Wang and Hao Li.
\newblock Heterochromatic matchings in edge-colored graphs.
\newblock {\em the electronic journal of combinatorics}, pages R138--R138,
  2008.

\bibitem{lesaulnier2010rainbow}
Timothy~D LeSaulnier, Christopher Stocker, Paul~S Wenger, and Douglas~B West.
\newblock Rainbow matching in edge-colored graphs.
\newblock {\em The Electronic Journal of Combinatorics [electronic only]},
  17(1):v17i1n26--pdf, 2010.

\bibitem{kostochka2012large}
Alexandr Kostochka and Matthew Yancey.
\newblock Large rainbow matchings in edge-coloured graphs.
\newblock {\em Combinatorics, Probability and Computing}, 21(1-2):255--263,
  2012.

\bibitem{lo2015existences}
Allan Lo.
\newblock Existences of rainbow matchings and rainbow matching covers.
\newblock {\em Discrete Mathematics}, 338(11):2119--2124, 2015.

\bibitem{wang2011rainbow}
Guanghui Wang.
\newblock Rainbow matchings in properly edge colored graphs.
\newblock {\em the electronic journal of combinatorics}, pages P162--P162,
  2011.

\bibitem{diemunsch2011rainbow}
Jennifer Diemunsch, Michael Ferrara, Casey Moffatt, Florian Pfender, and Paul~S
  Wenger.
\newblock Rainbow matchings of size$\backslash$delta (g) in properly
  edge-colored graphs.
\newblock {\em arXiv preprint arXiv:1108.2521}, 2011.

\bibitem{gyarfas2014rainbow}
Andr{\'a}s Gy{\'a}rf{\'a}s and G{\'a}bor~N S{\'a}rk{\"o}zy.
\newblock Rainbow matchings and cycle-free partial transversals of latin
  squares.
\newblock {\em Discrete Mathematics}, 327:96--102, 2014.

\bibitem{aharoni2019large}
Ron Aharoni, Eli Berger, Maria Chudnovsky, David Howard, and Paul Seymour.
\newblock Large rainbow matchings in general graphs.
\newblock {\em European Journal of Combinatorics}, 79:222--227, 2019.

\bibitem{babu2015rainbow}
Jasine Babu, L~Sunil Chandran, and Krishna Vaidyanathan.
\newblock Rainbow matchings in strongly edge-colored graphs.
\newblock {\em Discrete Mathematics}, 338(7):1191--1196, 2015.

\bibitem{wang2016existence}
Guanghui Wang, Guiying Yan, and Xiaowei Yu.
\newblock Existence of rainbow matchings in strongly edge-colored graphs.
\newblock {\em Discrete Mathematics}, 339(10):2457--2460, 2016.

\bibitem{cheng2018note}
Yangyang Cheng, Ta~Sheng Tan, and Guanghui Wang.
\newblock A note on rainbow matchings in strongly edge-colored graphs.
\newblock {\em Discrete Mathematics}, 341(10):2762--2767, 2018.

\bibitem{kano2008monochromatic}
Mikio Kano and Xueliang Li.
\newblock Monochromatic and heterochromatic subgraphs in edge-colored graphs-a
  survey.
\newblock {\em Graphs and Combinatorics}, 24(4):237--263, 2008.

\bibitem{fujita2009rainbow}
Shinya Fujita, Atsushi Kaneko, Ingo Schiermeyer, and Kazuhiro Suzuki.
\newblock A rainbow $ k $-matching in the complete graph with $ r $ colors.
\newblock {\em the electronic journal of combinatorics}, pages R51--R51, 2009.

\bibitem{haas2012anti}
Ruth Haas and Michael Young.
\newblock The anti-ramsey number of perfect matching.
\newblock {\em Discrete Mathematics}, 312(5):933--937, 2012.

\bibitem{fujita2010rainbow}
Shinya Fujita, Colton Magnant, and Kenta Ozeki.
\newblock Rainbow generalizations of ramsey theory: a survey.
\newblock {\em Graphs and Combinatorics}, 26(1):1--30, 2010.

\bibitem{de1980geography}
Dominique De~Werra.
\newblock Geography, games and graphs.
\newblock {\em Discrete Applied Mathematics}, 2(4):327--337, 1980.

\bibitem{de1981scheduling}
Dominique De~Werra.
\newblock Scheduling in sports.
\newblock {\em Studies on Graphs and Discrete Programming}, 11:381--395, 1981.

\bibitem{januario2016edge}
Tiago Januario, Sebasti{\'a}n Urrutia, Celso~C Ribeiro, and Dominique De~Werra.
\newblock Edge coloring: A natural model for sports scheduling.
\newblock {\em European Journal of Operational Research}, 254(1):1--8, 2016.

\bibitem{Kirkman1847on}
T.P. Kirkman.
\newblock On a problem in combinations.
\newblock {\em The Cambridge and Dublin Mathematical Journal}, 2:191--204,
  1847.

\bibitem{goossens2012soccer}
Dries~R Goossens and Frits~CR Spieksma.
\newblock Soccer schedules in europe: an overview.
\newblock {\em Journal of scheduling}, 15(5):641--651, 2012.

\bibitem{miyashiro2006minimizing}
Ryuhei Miyashiro and Tomomi Matsui.
\newblock Minimizing the carry-over effects value in a round-robin tournament.
\newblock In {\em Proceedings of the 6th International Conference on the
  Practice and Theory of Automated Timetabling}, pages 460--463. PATAT, 2006.

\bibitem{lambrechts2018round}
Erik Lambrechts, Annette~MC Ficker, Dries~R Goossens, and Frits~CR Spieksma.
\newblock Round-robin tournaments generated by the circle method have maximum
  carry-over.
\newblock {\em Mathematical Programming}, 172(1-2):277--302, 2018.

\end{thebibliography}

\end{document}